\newcommand{\E}{{\mathbb{E}}}
\newcommand{\R}{{\mathbb{R}}}
\newcommand{\D}{{\mathbb{D}}}
\newcommand{\N}{{\mathbb N}}
\newcommand{\ve}{\varepsilon}
\newtheorem{thm}{Theorem}[section]
\newtheorem{lem}{Lemma}[section]
\newtheorem{example}{Example}[section]
\newtheorem{remark}{Remark}[section]
\newtheorem{defn}{Definition}[section]
\begin{document}
\begin{center}
\textbf{  
\begin{Large}
    Universal generalized functionals and finitely absolutely continuous measures on Banach spaces 
\end{Large}
 }
\end{center}

\begin{center}
   \textbf{A. A. Dorogovtsev} \\
   Institute of Mathematics, National Academy of Sciences of Ukraine, Ukraine \\
E-mail address : \textit{andrey.dorogovtsev@gmail.com }

\end{center}

\begin{center}
\textbf{Naoufel Salhi} \\
Laboratory of Stochastic Analysis and Applications, Department of Mathematics, Faculty of Sciences of Tunis, University of Tunis El Manar, Tunisia\\
E-mail address : \textit{salhi.naoufel@gmail.com}
\end{center}
\begin{center}
\textit{To the memory of professor Habib Ouediane}
\end{center}
\begin{abstract}
In this paper we collect several examples of convergence of functions of random processes to generalized functionals of those processes. We remark that the limit is always finitely absolutely continuous with respect to Wiener measure. We try to unify those examples in terms of convergence of probability measures in Banach spaces. The key notion is the condition of uniform finite absolute continuity.
\end{abstract}

\begin{flushleft}
\textit{2010 Mathematics Subject Classification } 60A10, 60G15, 60H05 \\

\textit{Key words :} generalized Wiener functions; generalized Gaussian functional of first kind; Gaussian integrator; intersection local time; It\^{o}-Wiener expansion; finite absolute continuity; uniform finite absolute continuity; 
\end{flushleft}

\newpage

\section*{Introduction}
In this article we discuss the generalized functionals from the stochastic processes. The idea of generalized functionals is closely related to the investigation of geometric properties of the random processes. Simple but important examples are the Rice formula \cite{Adler} and the local time of Wiener process \cite{Markus}. 
\begin{example}
    Suppose that $\xi (t), \, t\in [0,1]$ is a centered Gaussian process with the smooth covariance. Then, for fixed level $c$, the expected number of upcrossings of the level $c$ by the process $\xi$ is equal \cite{Adler}  to the following expression 
\begin{equation}
    \label{Rice}
    \int_0^1 \int_0 ^{+\infty} xq_t(c,x)dxdt  
\end{equation}                                              
Here $q_t(x)$ is the joint distribution of $\xi(t)$ and $\xi'(t)$. Formally, expression \eqref{Rice}  can be obtained easily. Every upcrossing is associated with the formal expression                  \begin{equation}
    \label{RiceDirac}
    \delta_c \big( \xi(t) \big) \mathbf{1}_{(0, + \infty )} \big( \xi'(t) \big) \xi'(t) 
\end{equation}                                           
Keeping in mind that the result of action of $\delta_c$ on the continuous bounded function is the value of the function at the point $c$ we easily get the Rice formula                                            
\begin{equation}
    \label{RiceFormula}
   \int_0^1 \E \Big( \delta_c \big( \xi(t) \big) \mathbf{1}_{(0, + \infty )} \big( \xi'(t) \big) \xi'(t)   \Big) dt= \int_0^1 \int_0 ^{+\infty} xq_t(c,x)dxdt  
\end{equation}                                            
Such approach together with the formal expression was proposed by M. Kac \cite{Kac}. But how to bring the rigorous sense to the expression \eqref{RiceDirac}?   

\end{example}
 The same problem arises under the consideration of the local time of Brownian motion.
 \begin{example}
     Let $w(t),\, t\in [0,1]$ be a standard Wiener process. It is well known \cite{Markus} that for any $x\in \R$ there exists the local time which $w$ spends at the infinitesimal small neighborhood of $x$, i.e. 
     $$ \ell (x)=L_2-\lim _{\ve \to 0^+} \dfrac{1}{2\ve} \int_0^1 \mathbf{1}_{[x-\ve, x+\ve]} \big( w(t) \big) dt. $$
     Keeping in mind that in the sense of generalized functions 
     $$ \lim _{\ve \to 0^+} \dfrac{1}{2\ve} \mathbf{1}_{[x-\ve, x+\ve]} =\delta_x $$
     one can write formally 
     $$ \ell(x)=\int_0^1 \delta_x \big( w(t) \big) dt .$$
     Such expression is very useful for production of new formulas for local time which can then be proved rigorously. For example, occupation formula for bounded measurable function $f$ 
     \begin{align*}
         \int_{\R } f(x) \ell(x)dx 
         &=\int_{\R } f(x)\int_0^1 \delta_x \big( w(t) \big) dtdx \\
         &=\int_0^1 \Big(  \int_{\R } f(x)  \delta_x \big( w(t) \big)dx \Big) dt \\
         &=\int_0^1 f\big( w(t) \big) dt,
     \end{align*}
     or Kac moment formula 
     \begin{align*}
         \E \ell(x)^n 
         &=\E \int_0^1 \cdots\int_0^1 \delta_x \big( w(t_1) \big) \cdots \delta_x \big( w(t_n) \big) dt_1\cdots dt_n  \\
         &=n! \int_{ 0\leqslant t_1 \cdots \leqslant t_n \leqslant 1  } \E \Big( \delta_x \big( w(t_1) \big) \cdots \delta_x \big( w(t_n) \big) \Big) dt_1\cdots dt_n  \\
         &=n! \int_{ \Delta_n  } \dfrac{1}{ (2\pi)^{ \frac{n}{2} }  } e^{-\frac{x^2}{2t_1}} \dfrac{1}{\sqrt{t_1}} \dfrac{1}{\sqrt{t_2-t_1}} \cdots \dfrac{1}{\sqrt{t_{n}-t_{n-1}}}  dt_1\cdots dt_n ,
     \end{align*}
     where $\Delta_n=\{(t_1,\cdots,t_n) \, : \, 0\leqslant t_1 \cdots \leqslant t_n \leqslant 1\}$. In this case also it is useful to create a formal definition of $\delta_x \big( w(t) \big) $ and the rules of manipulation with it. Such formal definition was done in \cite{FHSW, Kuo} and became to be a partial case of the notion of generalized Wiener function. 
 \end{example}
   In order to define generalized Wiener functions we need to introduce the family of Sobolev spaces $\D ^{2,\gamma}$ over Wiener space (see for example \cite{Mall, Sugita} for more details). First of all, let $w(t)=(w_1(t),\cdots, w_d(t))$ be a $d$-dimensional Brownian motion and denote by $\sigma(w)$ the $\sigma$-field generated by it. It is known that every square integrable Wiener random variable $\eta \in L^2\big( \Omega, \sigma(w), \mathbb{P} \big)$ has an It\^{o}-Wiener expansion \cite{AAD} which consists on the $L^2$-convergent series 
   $$ \eta =\sum_{k=0}^{\infty} I_k(f_k) $$
   where $I_k(f_k)$ denotes a $k$-multiple It\^{o} stochastic integral of the deterministic and symmetric square integrable kernel $f_k$. Now let $\gamma \in \R$. The Sobolev space $\D^{2,\gamma}$ is the completion of the following space :
$$ \left\lbrace   \eta=\sum_{k=0}^n I_k (f_k) \in L^2\big( \Omega, \sigma(w),\mathbb{P}\big),\, n\in \N\right\rbrace  $$
with respect to the norm : 
$$ \big \| \eta\big \| _{2,\gamma} ^2=\sum_{k=0}^{n} (k+1)^{\gamma } \, \E \, I_k(f_k)^2 \,.  $$
If $0<\gamma_1 <\gamma_2 $ then the following inclusions are true
$$  \D^{2,\gamma_2} \subset \D^{2,\gamma_1} \subset \D^{2,0}=L^2\big( \Omega, \sigma(w),\mathbb{P}\big) \subset \D^{2,-\gamma_1} \subset \D^{2,-\gamma_2} . $$
Moreover, for any real number $\gamma$ the space $\D^{2,-\gamma} $ is the dual space of $\D^{2,\gamma} $. When $\gamma <0$, the elements of $\D^{2,\gamma}$ are called generalized Wiener functionals. The spaces $\D^{2,\pm\infty}$ are defined respectively as projective and inductive limits  
$$ \D^{2,+\infty}=\cap_{\gamma >0} \D^{2,\gamma}\,,\; \D^{2,-\infty}=\cup _{\gamma >0} \D^{2,-\gamma}\,. $$
The probability space $\big( \Omega, \sigma(w),\mathbb{P}\big) $ can be replaced by the classical Wiener space 
$$\Bigg( \mathscr{C}_0\big( [0,1],\R^d\big),\mathcal{B}\Big( \mathscr{C}_0\big( [0,1],\R^d\big)  \Big),\mu_0 \Bigg)$$ 
where 
$$\big( \mathscr{C}_0\big( [0,1],\R^d\big)=\left\lbrace \omega :[0,1]\to \R^d,\, \text{continuous},\, \omega(0)=0\right\rbrace  $$ 
is endowed with the Borel $\sigma$-field $\mathcal{B} \Big(  \mathscr{C}_0\big( [0,1],\R^d\big)  \Big)$ generated by the supremum norm and with the standard Wiener measure $\mu_0$. \\
Let $\gamma >0$. A test function $F\in \D^{2,\gamma} $ is said to be positive if 
$$ F(\omega) \geqslant 0 \quad \mu_0 - \text{a.e.} $$
A generalized Wiener function $\eta \in \D^{2,-\gamma}$ is said to be positive if the bilinear pairing with any positive test function $F\in \D^{2,\gamma} $ is non-negative :
$$ (\eta, F) \geqslant 0.$$
A Sugita theorem \cite{Sugita} states that any positive generalized Wiener function can be represented by a measure on the Wiener space. 
\begin{thm}\label{sugita} \cite{Sugita}
If a generalized Wiener functional $\eta \in \D^{2,-\infty}$ is positive then there exists a unique finite positive measure $\theta $ on $\mathscr{C}_0\big( [0,1],\R^d\big)$ such that 
$$\forall \; F\in \mathcal{F}\mathscr{C}_b^{\infty}\Big( \mathscr{C}_0\big( [0,1],\R^d\big)  \Big) , \quad(\eta, F)=\int_{ \mathscr{C}_0\big( [0,1],\R^d\big) } F(\omega )\,\theta (d\omega) \, .$$
Here $\mathcal{F}\mathscr{C}_b^{\infty}\Big( \mathscr{C}_0\big( [0,1],\R^d\big)  \Big)$ denotes the set of Wiener test functions $F\in \D^{2,+\infty} $ of the form 
$$F(\omega)=f(\ell_1(\omega),\cdots,\ell_n(\omega) )$$ 
where $f\in \mathscr{C}_b^{\infty}(\R^d)$ and $\ell_1,\cdots,\ell_n \in \Big( \mathscr{C}_0\big( [0,1],\R^d\big) \Big) ^*$.\\
\end{thm}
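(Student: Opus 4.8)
The plan is to realize the linear form $F\mapsto (\eta,F)$ as integration against a finite positive measure by first constructing all of its finite-dimensional marginals and then gluing them into a single measure on the path space.

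First I would extract a uniform-norm bound directly from positivity. The constant function $\mathbf{1}=I_0(1)$ lies in every $\D^{2,\gamma}$ and is a positive test function, so $(\eta,\mathbf{1})\geqslant 0$. For any $F\in \mathcal{F}\mathscr{C}_b^{\infty}$ both $\|F\|_\infty\,\mathbf{1}-F$ and $\|F\|_\infty\,\mathbf{1}+F$ are nonnegative $\mu_0$-a.e.\ and again lie in $\mathcal{F}\mathscr{C}_b^{\infty}$, hence pairing against $\eta$ is nonnegative for both. This yields the monotonicity estimate
$$ \big| (\eta,F)\big| \leqslant (\eta,\mathbf{1})\,\|F\|_\infty , $$
so $\eta$ acts as a positive linear functional on cylinder functions that is bounded in the supremum norm, with ``total mass'' at most $(\eta,\mathbf{1})$.

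Second, I would fix continuous linear functionals $\ell_1,\dots,\ell_n\in \big(\mathscr{C}_0([0,1],\R^d)\big)^*$ and consider $\Lambda_\ell(f):=\big(\eta, f(\ell_1,\dots,\ell_n)\big)$ for $f\in \mathscr{C}_b^{\infty}(\R^n)$. By the previous step $\Lambda_\ell$ is positive and bounded by $(\eta,\mathbf{1})\,\|f\|_\infty$, so the classical Riesz representation theorem on $\R^n$ provides a finite positive Borel measure $\nu_\ell$ with $\nu_\ell(\R^n)\leqslant (\eta,\mathbf{1})$ and $\Lambda_\ell(f)=\int_{\R^n} f\,d\nu_\ell$. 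Because every $\nu_\ell$ is defined through one and the same functional $\eta$, the family $\{\nu_\ell\}$ is automatically consistent: the image of $\nu_{(\ell_1,\dots,\ell_n)}$ under a coordinate projection coincides with the measure attached to the corresponding subfamily. I would then pass from this projective family to a single finite positive measure $\theta$ on $\mathscr{C}_0([0,1],\R^d)$ by a Prokhorov/Kolmogorov-type extension, after which uniqueness is immediate since $\mathcal{F}\mathscr{C}_b^{\infty}$ is measure-determining on the path space.

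The main obstacle is tightness. On the infinite-dimensional space $\mathscr{C}_0([0,1],\R^d)$ a consistent family of marginals does not by itself produce a genuine (Radon) measure: one must rule out escape of mass to infinity in each $\nu_\ell$ and confine the glued object to the actual path space rather than to some larger algebraic dual. This is precisely where membership in $\D^{2,-\gamma}$, as opposed to mere abstract positivity, has to be used. The plan is to convert the Sobolev bound on $\eta$, via the $(2,\gamma)$-capacity naturally associated with the test space $\D^{2,\gamma}$, into uniform tightness of the marginals $\nu_\ell$ and into the statement that $\theta$ charges no set of capacity zero; quasi-continuous representatives of positive test functions give the link between the pairing and integration against $\theta$ outside a capacity-null set. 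Establishing these capacity and tightness estimates is the technical heart of the argument, while the reduction to marginals, the Riesz step, and uniqueness are comparatively soft.
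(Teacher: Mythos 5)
The paper itself gives no proof of this statement: it is quoted verbatim from Sugita's article, so the only meaningful comparison is with Sugita's original argument, whose overall architecture (positivity $\Rightarrow$ sup-norm bound $\Rightarrow$ finite-dimensional representation $\Rightarrow$ extension, with capacities supplying tightness) your plan correctly mirrors.

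That said, your proposal has an acknowledged hole exactly at the point where the theorem stops being soft functional analysis, and it should be called a genuine gap. Two places need real content. First, the Riesz step on $\R^n$ is incomplete as stated: a positive linear functional on $\mathscr{C}_b^{\infty}(\R^n)$ bounded by the sup-norm corresponds, via Riesz on $C_c$-functions, only to a measure $\nu_\ell$ with $\nu_\ell(\R^n)\leqslant (\eta,\mathbf{1})$, and mass may escape to infinity, so $\Lambda_\ell(f)=\int f\,d\nu_\ell$ is not automatic for bounded $f$. The fix is cheap but uses the Sobolev structure even at this finite-dimensional stage: since $\eta\in\D^{2,-\infty}=\cup_{\gamma>0}\D^{2,-\gamma}$, it lies in a fixed $\D^{2,-\gamma}$, and smooth cutoffs $\chi(\cdot/R)$ composed with $(\ell_1,\dots,\ell_n)$ converge to $\mathbf{1}$ in $\D^{2,\gamma}$ (their derivatives are $O(1/R)$ and the $\ell_i$ are Gaussian), so continuity of the pairing gives $\Lambda_\ell(f_R)\to(\eta,\mathbf{1})$ and rules out escape of mass. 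You never make this argument. Second, and decisively: the Kolmogorov/Bochner extension of your consistent family produces a priori only a finitely additive cylinder-set measure, equivalently a countably additive measure on a space much larger than $\mathscr{C}_0\big([0,1],\R^d\big)$. Confining the mass to the Banach space requires the tightness of the $(2,\gamma)$-capacity --- for every $\ve>0$ a compact $K\subset \mathscr{C}_0\big([0,1],\R^d\big)$ with $C_{2,\gamma}(K^c)<\ve$ --- combined with the bound $(\eta,F)\leqslant \|\eta\|_{2,-\gamma}\|F\|_{2,\gamma}$ applied to test functions $0\leqslant F\leqslant 1$ equal to $1$ off $K$, whose $\D^{2,\gamma}$-norm the capacity controls. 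This capacity-tightness theorem is the actual content of Sugita's result; it is a nontrivial theorem about the Ornstein--Uhlenbeck Sobolev spaces, not a routine estimate, and your proposal only names it (``the technical heart'') without any indication of proof. Without it the construction stalls before producing a Radon measure on path space. The remaining ingredients you list --- consistency of the marginals, and uniqueness from the fact that smooth cylinder functions determine finite Borel measures on a Polish space via a monotone-class argument --- are indeed the soft part and are handled correctly.
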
 
One of the possible applications of the mentioned theory is the study of the geometry of the trajectories of multi-dimensional Brownian motion. Such investigations are related to the mathematical model of free linear polymer \cite{Bolthausen, Hollander}. In this model, a trajectory of Wiener process is treated as an instant conformation of the long linear polymer molecule. Then, the excluded volume effect leads to the necessity to study self-intersections. In this way Evans measure arises \cite{Evans}. Since the theory is useful and well-developed for Wiener process then it is natural to ask about the same constructions and properties for processes different from Wiener process. A lot of attempts were done in this direction (see, for instance, \cite{Markus, Rosen, LeGall}) but here we mention the class of processes introduced by A.A. Dorogovtsev and called Gaussian integrators. 
\begin{defn}\cite{Dor} \label{def}
    A (one dimensional) centered Gaussian process $ x(t) , t \in [0,1] $ is said to be an integrator if there exists a constant $C>0$ such that for any arbitrary partition $ 0=t_{0} < t_{1}< \cdots < t_{n}=1 $ and real numbers $ a_{0} , \cdots ,a_{n-1} $ : 
\begin{equation}
\label{Ineq}
\mathbb{E}\left[ \sum_{k=0}^{n-1}a_{k}( x(t_{k+1})-x(t_{k})) \right]^{2}  \; \leq \; C\sum_{k=0}^{n-1}a_{k}^{2}(t_{k+1}-t_k) . 
\end{equation}
\end{defn}
For such processes some singular functionals were considered (see, for example, \cite{DOGS, DOS}). Recently, in \cite{DS24} the large deviation principle for the measure which is corresponding to generalized functional of self-intersection was obtained. In view of mentioned works we face the following general problem which is main object of discussions in this paper. Let us formulate it in the abstract form. \\
  
  Let $B$ be a real separable Banach space with the norm $\| \cdot \|$. All the measures on $B$ are supposed to be defined on the Borel $\sigma$-field $\mathcal{B}$. Also, all functions on $B$ are supposed to be Borel measurable. Consider a family $\Phi _\ve,\, \ve >0$ of functions on $B$ such that for every $\ve >0$, $\Phi_\ve : B \to \R$ has a continuous Frechet derivative and is bounded on $B$ together with its derivative. Suppose that $\mu$ is a probability measure on $B$ with all finite moments of the norm and such that finite-dimensional polynomials are dense in $L_2\big( B,\mathcal{B},\mu \big)$. Then, for every $\ve >0$, $\Phi_\ve$ has an expansion via orthogonal polynomials 
$$\Phi_\ve = \sum_{n=0}^{\infty} I_n^\ve.$$
Suppose that, for every $ \ve >0$, $\Phi_\ve \geqslant 0$ and for every $n\geqslant 0$ there exists a limit 
$$I_n^0=L_2-\lim _{\ve \to 0 } I_n^\ve.$$
Then the formal series 
$$\sum_{n=0}^\infty I_n^0$$
can be considered as a non-negative generalized functional on the measurable space $\big( B,\mathcal{B},\mu \big)$. In certain cases it can be associated with some new measure $\nu$ on $\mathcal{B}$ (as in Sugita theorem). In this article we discuss the following problem. When for the family $\{\Phi_\ve, \ve >0\}$ there exists a set of probability measures $\mathcal{M}$ such that for every $\mu\in \mathcal{M}$, $\Phi_\ve$ converges to a generalized functional on $\big( B, \mathcal{B}, \mu \big)$? Consider an example of such situation. 
\begin{example}
    Let $B=\mathscr{C}_0\big( [0,1],\R\big)$ be the $1$-dimensional Wiener space. Define the family $\Phi_\ve$ as follows : 
    $$\forall \; f\in \mathscr{C}_0\big( [0,1],\R\big), \quad \Phi_\ve (f)=\dfrac{1}{\sqrt{2\pi \ve}} e^{ -\frac{f(1)^2}{2\ve} }.  $$
    Now consider as a measure $\mu$ the standard Wiener measure $\mu_0$ on $\mathscr{C}_0\big( [0,1],\R\big)$. Then, (see, for example, \cite{DS23}) $\Phi_\ve$ converges when $\ve \to 0^+$ to a generalized functional which has an It\^{o}-Wiener expansion 
    $$ \Phi_0(f)=\sum_{n=0}^\infty \dfrac{1}{n!\sqrt{2\pi}} H_n(0) H_n\big( f(1) \big) $$
    and is corresponding to the distribution of the Brownian bridge. Here $H_n$ denote the Hermite polynomials
$$H_n(x)=(-1)^n e^{ \frac{x^2}{2} } \Big(\dfrac{d}{dx} \Big)^n e^{ -\frac{x^2}{2} }. $$
    From other side, define $\mu$ as a distribution in $\mathscr{C}_0\big( [0,1],\R\big)$ of the random process 
    $$\eta(t)=t.\xi,\; t\in [0,1]$$
    where $\xi$ is a standard Gaussian random variable. Then, again, $\Phi_\ve$ converges when $\ve \to 0^+$ to a generalized functional with the same chaotic expansion but with another measure representation via Sugita theorem. Now it is a probability measure concentrated on the one function $f\equiv 0$. Actually, it can be checked that $\Phi_\ve$ converges to a generalized functional for a very wide set of measures $\mu$. In this sense, $\Phi_\ve$ is universal.
\end{example}

  In this paper we will discuss the conditions for universality of the different families $\{ \Phi_\ve, \ve >0 \}$. To do this we use the notion of finite absolute continuity of measures on the Banach space introduced in \cite{AADFAC}. This notion defines connection between the weak moments of two probability measures.
\begin{defn}\cite{AADFAC} \label{FAC0}
A probability measure $\nu$ with weak moments of arbitrary order on the Banach space $B$ is finitely absolutely continuous with respect to the probability measure $\mu$ (this fact is denoted $\nu <<_0 \mu$) if for any $n\in \N$ there exists a constant $c_n>0$ such that for any polynomial $P_n : \R^n\to \R$ with degree at most $n$ and any $\ell_1,\cdots, \ell_n \in B^*$ we have 
$$\Big| \int _{ B} P_n ( \ell_1(\omega),\cdots , \ell_n (\omega) ) d\nu(\omega) \Big| \leqslant c_n \Big(  \int _{ B } P_n^2 ( \ell_1(\omega),\cdots, \ell_n (\omega) ) d\mu(\omega) \Big) ^{\frac{1}{2} }. 
$$ 
\end{defn}
Note that if $\mu$ is a Gaussian measure on $B$ and $\nu$ is a measure related to a positive generalized functional on $\big( B,\mathcal{B},\mu\big)$  then, evidently,
$$\nu <<_0 \mu.$$
This observation leads to the following idea. It seems that uniform condition of finite absolute continuity 
$$\Phi_\ve \mu <<_0 \mu,\; \ve >0$$
will guarantee existence of the generalized functional $\Phi_0$ on $\big( B,\mathcal{B},\mu\big)$ which is a limit of $\Phi_\ve$ when $\ve$ tends to $0$. It occurs to be true. Of course the condition of uniformity can specified at concrete cases. We present the corresponding examples related to the functional counting self-intersections.\\

\begin{example}
Let $w(t), t\in[0,1]$ be a $d$-dimensional Brownian motion and let $u\in \R^d\setminus \{0\}$. Consider the following formal expression 
\begin{equation}
    \label{formal}
    \rho(u)=\int_{\Delta_2} \delta_u\big( w(t_2)-w(t_1) \big) dt_1dt_2
\end{equation}
In \cite{IPA} a rigorous meaning was associated to $\rho(u)$ as follows. First we define the Dirac $\delta$-function $\delta_u$ as a limit of the family $\{ p_\ve ^d, \ve >0 \}$ of functions defined by 
$$p_\varepsilon^d (x)=\dfrac{1}{(2\pi \varepsilon)^{\frac{d}{2}}} \exp \Big( -\dfrac{\|x\|^2}{2\varepsilon} \Big), \quad \varepsilon >0,\, x\in \R^d.  $$
($p_\ve ^1$ will simply be denoted $p_\ve$.)\\
Consequently, $\rho(u)$ should be understood as
\begin{equation}
    \label{nonformal}
    \rho(u)=  \lim_{\varepsilon \to 0}\int_{\Delta_2}   p_\varepsilon ^d \big( w(t_2)-w(t_1)-u \big) dt_1dt_2
\end{equation} 
Denote by $\Phi_\ve$ the integral in the right hand side of \eqref{nonformal}. It was proved (see \cite{IPA}) that $\Phi_\ve$ has an It\^{o}-Wiener expansion given by 
\begin{equation}\label{Chaos1}
\Phi_\ve
=\sum_{k= 0}^{\infty}  \sum _{n_1+\cdots+n_d=k} \int_{\Delta_2} \prod _{1\leqslant j\leqslant d} \left\lbrace  \dfrac{1}{  n_j! }\,H_{n_j}\Big( \dfrac{w_j(t_2)-w_j(t_1)}{\sqrt{ t_2-t_1+\ve} } \Big)\,H_{n_j}\Big( \dfrac{u_j}{\sqrt{ t_2-t_1+\ve} } \Big) \right\rbrace \,  p^d_{t_2-t_1+\ve}(u)  dt_1dt_2
\end{equation} 
Taking the limit when $\ve \to 0$ in each term of \eqref{Chaos1}
yields the formal It\^{o}-Wiener expansion of $\rho(u)$ : 
\begin{equation}\label{Chaos}
\rho(u)
=\sum_{k= 0}^{\infty}  \sum _{n_1+\cdots+n_d=k} \int_{\Delta_2} \prod _{1\leqslant j\leqslant d} \left\lbrace  \dfrac{1}{  n_j! }\,H_{n_j}\Big( \dfrac{w_j(t_2)-w_j(t_1)}{\sqrt{ t_2-t_1} } \Big)\,H_{n_j}\Big( \dfrac{u_j}{\sqrt{ t_2-t_1} } \Big) \right\rbrace \,  p^d_{t_2-t_1}(u)  dt_1dt_2
\end{equation} 
It was proved in \cite{IPA} that the formal series $\rho(u)$ is in fact an element of the Sobolev spaces $\D^{2,\gamma}$ such that 
$$\gamma < \dfrac{4-d}{2} $$
and that $\Phi_\ve$ converges, when $\ve \to 0$, to $\rho(u)$ in each of those spaces.\\

Therefore, the intersection local time formally defined by $\rho(u)$ is a positive generalized Wiener functional whenever $d\geqslant4$. Besides, using Sugita theorem, (see \cite{DS23}), $\rho(u)$ can be represented by a measure on the Wiener space $\mathscr{C}_0\big( [0,1],\R^d\big)$ if, and only if, $d\geqslant 4$.
\end{example}

  Correspondingly to the above mentioned arguments, the article is divided into three parts. The first part contains necessary definitions and facts about finite absolute continuity, mostly from \cite{AADFAC,Riabov}. Second part contains a proof of convergence of $\Phi_\ve$ under uniform finite absolute continuity. The last part contains concrete examples of universal families $\Phi_\ve, \, \ve >0$.

\section{Survey about finite absolute continuity and polynomially non degenerate measures}
We recall here some definitions, examples and statements introduced and analysed mainly by A.A. Dorogovtsev in \cite{AADFAC}.\\
\noindent 
For every $n\in \N$ let $\mathcal{P}_n$ be the set of all polynomials of degree less or equal to $n$ defined on $B$. Denote by $\overline{ \mathcal{P}_n }$ its closure in $L_2\big( B,\mu \big)$. Define $K_n$ as the orthogonal complement of $ \overline{ \mathcal{P}_n } $ in $\overline{ \mathcal{P}_{n+1} }$. Since the set of all finite dimensional polynomials is dense in $L_2\big( B,\mu \big)$ then the following orthogonal decomposition holds 
$$ L_2\big( B,\mu \big)=\bigoplus _{n=0}^\infty K_n. $$
Denote by $J_n$ the orthogonal projection in $L_2\big( B,\mu \big)$ onto $K_n$. Denote by $H_{n,s}$ the space of $n$-linear continuous symmetric forms on $B$. If we denote by $H_2$ the space $K_1$ then the space $H_{n,s}$ can be identified with the symmetric part of the tensor power $H_2^{ \otimes n }$. Denote by $\|\cdot\| _n$ the associated norm. 
\begin{defn}\cite{AADFAC} \label{PNM}
A measure $\mu$ on $B$ is called polynomially non-degenerate if there exist sequences $(c_n)_n$ and $(C_n)_n$ of positive numbers such that, for any finite dimensional $n$-linear symmetric continuous form $A_n$ on $B$, the following inequality holds 
$$c_n \| A_n\| _n^2 \leqslant  \int _{ B} \Big(J_n A_n \Big)^2(\omega) d\mu(\omega)  \leqslant C_n \| A_n\| _n^2. 
$$ 
\end{defn}
The following results give us some examples of polynomially non-degenerate measures.
\begin{lem}\cite{AADFAC}
    Suppose a measure $\mu$ is polynomially non-degenerate and a measure $\nu$ is such that $\mu \sim \nu$ and the following conditions are satisfied 
    \begin{enumerate}
        \item $0< \mathrm{ess \, inf} \dfrac{d\nu}{d\mu} \leqslant \mathrm{ess \, sup} \dfrac{d\nu}{d\mu} < \infty    $
        \item  the mean value of $\nu$ is equal to $0$.
    \end{enumerate}
    Then $\nu$ is polynomially non-degenerate.
\end{lem}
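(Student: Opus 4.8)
The plan is to compare every object attached to $\nu$ with its counterpart for $\mu$, using only the two-sided bound on the density. Write $a=\mathrm{ess\,inf}\,\frac{d\nu}{d\mu}$ and $b=\mathrm{ess\,sup}\,\frac{d\nu}{d\mu}$, so that $0<a\le b<\infty$. Since $d\nu=\frac{d\nu}{d\mu}\,d\mu$, for every $f\in L_2\big(B,\mu\big)$ one has
$$a\int_B f^2\,d\mu\;\le\;\int_B f^2\,d\nu\;\le\;b\int_B f^2\,d\mu,$$
so the $\mu$- and $\nu$-norms are equivalent on the space of square integrable functions. In particular $\nu$ still has all weak moments and finite dimensional polynomials are still dense in $L_2\big(B,\nu\big)$, so the decomposition $L_2\big(B,\nu\big)=\bigoplus_n K_n^{\nu}$, the projections $J_n^{\nu}$ and the norms $\|\cdot\|_{n,\nu}$ are well defined and the statement ``$\nu$ is polynomially non-degenerate'' makes sense. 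Moreover, because $\mu\sim\nu$ the almost-everywhere classes coincide and equivalent norms produce the same closures, so $\overline{\mathcal{P}_n}$ is one and the same closed subspace for both measures, which I denote $\mathcal{Q}_n$. Thus the nested family $\mathcal{Q}_0\subset\mathcal{Q}_1\subset\cdots$ does not depend on the measure; only the inner product used to split it does.

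The first key step is to rewrite the middle term of Definition \ref{PNM} as a distance. For a finite dimensional symmetric form $A_n$ let $\widetilde A_n(\omega)=A_n(\omega,\dots,\omega)\in\mathcal{Q}_n$ be its diagonal. Since $K_n=\mathcal{Q}_n\ominus\mathcal{Q}_{n-1}$ (orthogonal complement in the relevant inner product), the projection of $\widetilde A_n$ onto $K_n$ is $\widetilde A_n$ minus its best approximation in $\mathcal{Q}_{n-1}$, whence
$$\int_B\big(J_n^{\mu}A_n\big)^2 d\mu=\mathrm{dist}_{\mu}\big(\widetilde A_n,\mathcal{Q}_{n-1}\big)^2,\qquad \int_B\big(J_n^{\nu}A_n\big)^2 d\nu=\mathrm{dist}_{\nu}\big(\widetilde A_n,\mathcal{Q}_{n-1}\big)^2.$$
The point is that both distances are computed from the same vector $\widetilde A_n$ to the same subspace $\mathcal{Q}_{n-1}$, only the norm changing; taking the infimum over $g\in\mathcal{Q}_{n-1}$ in the norm equivalence gives at once
$$a\int_B\big(J_n^{\mu}A_n\big)^2 d\mu\;\le\;\int_B\big(J_n^{\nu}A_n\big)^2 d\nu\;\le\;b\int_B\big(J_n^{\mu}A_n\big)^2 d\mu.$$

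It remains to compare the norms $\|A_n\|_n$ attached to the two measures, and this is where the second hypothesis enters. Since the mean of $\nu$ vanishes, $\int_B\ell\,d\nu=\ell\big(\int_B\omega\,d\nu\big)=0$ for every $\ell\in B^*$, so each linear functional is already $\nu$-centered and the first chaos of $\nu$ is $H:=\overline{B^*}$, the same space serving as first chaos of the centered reference measure $\mu$. Hence $H_{n,s}$ is identified, for both measures, with the symmetric part of the same algebraic tensor power of $H$, and the two norms $\|\cdot\|_{n,\mu}$, $\|\cdot\|_{n,\nu}$ differ only through the inner product carried by $H$. Letting $T$ be the positive operator on $\big(H,\langle\cdot,\cdot\rangle_{\mu}\big)$ with $\langle f,g\rangle_{\nu}=\langle Tf,g\rangle_{\mu}$, the norm equivalence yields $aI\le T\le bI$, hence $a^nI\le T^{\otimes n}\le b^nI$ on $H^{\otimes n}$ and
$$a^n\|A_n\|_{n,\mu}^2\;\le\;\|A_n\|_{n,\nu}^2\;\le\;b^n\|A_n\|_{n,\mu}^2.$$

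Finally I would chain the three comparisons with the non-degeneracy of $\mu$, namely $c_n\|A_n\|_{n,\mu}^2\le\int_B(J_n^{\mu}A_n)^2 d\mu\le C_n\|A_n\|_{n,\mu}^2$. Combining the distance estimate with the tensor-norm estimate gives
$$\frac{a\,c_n}{b^{\,n}}\,\|A_n\|_{n,\nu}^2\;\le\;\int_B\big(J_n^{\nu}A_n\big)^2 d\nu\;\le\;\frac{b\,C_n}{a^{\,n}}\,\|A_n\|_{n,\nu}^2,$$
so $\nu$ is polynomially non-degenerate with constants $c_n'=a\,c_n\,b^{-n}$ and $C_n'=b\,C_n\,a^{-n}$, both strictly positive and finite. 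The routine parts are the norm equivalence and the final chaining; the two steps requiring care are the reformulation of $J_n$ as a distance to $\mathcal{Q}_{n-1}$ (which is exactly what lets the two different projections be compared, since they act on a common subspace) and the alignment of the first-chaos spaces. I expect the latter to be the main obstacle: it is the only place where an \emph{indefinite} change of inner product, rather than a one-sided norm bound, must be controlled, and it is precisely where the mean-zero hypothesis is indispensable.
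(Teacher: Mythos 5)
Your argument is correct, but there is nothing in the paper to compare it against: the lemma is stated with a citation to \cite{AADFAC} and no proof is reproduced, so your proposal can only be judged on its own merits. On those merits it is sound, and the final constants $c_n'=a\,c_n\,b^{-n}$, $C_n'=b\,C_n\,a^{-n}$ chain together correctly. The two genuinely nontrivial moves are both handled properly: (i) since $\mu\sim\nu$ with $a\leqslant d\nu/d\mu\leqslant b$, the closures $\overline{\mathcal{P}_n}$ coincide for the two measures, and rewriting $\int_B (J_nA_n)^2$ as the squared distance from the diagonal polynomial $\widetilde A_n$ to $\overline{\mathcal{P}_{n-1}}$ is exactly what makes the two \emph{different} projections $J_n^{\mu}$, $J_n^{\nu}$ comparable — the norm equivalence survives the infimum over the common subspace, whereas a direct comparison of the projections themselves would not go through; (ii) the bound $aI\leqslant T\leqslant bI$ for the operator relating the two inner products on the common first chaos tensorizes to $a^nI\leqslant T^{\otimes n}\leqslant b^nI$ because the factors $I\otimes\cdots\otimes T\otimes\cdots\otimes I$ are commuting positive operators, and this gives the comparison of the kernel norms $\|\cdot\|_n$ on finite-dimensional symmetric forms, which is all the definition requires. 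Two small remarks. First, you tacitly assume $\mu$ is centered when you identify \emph{both} first chaoses with the closure of $B^*$; the lemma as stated does not say so, but it is a standing assumption of the framework (the paper's reference measures are centered, as in Section 2), and your diagnosis is accurate: this is precisely where the mean-zero hypothesis on $\nu$ is indispensable, since for a measure with mean $m\neq 0$ the embedding would be $\ell\mapsto\ell-\ell(m)$ rather than $\ell\mapsto\ell$ and the identification of $H_{n,s}$ with the symmetric part of $H_2^{\otimes n}$ would no longer be measure-independent. Second, the paper's definition of $K_n$ as the orthogonal complement of $\overline{\mathcal{P}_n}$ in $\overline{\mathcal{P}_{n+1}}$ is off by one (it would force $J_n\widetilde A_n=0$ for a degree-$n$ diagonal); your convention $K_n=\overline{\mathcal{P}_n}\ominus\overline{\mathcal{P}_{n-1}}$ is clearly the intended one, so this discrepancy is the paper's, not yours.
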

    
\begin{lem}\cite{AADFAC}
    Suppose that $B$ is a real separable Hilbert space and $\mu$ is a Gaussian measure on $B$ with mean value zero and the non degenerate correlation operator whose eigenvalues $\{ \lambda_n, n\geqslant 0 \}$  are such that 
    $$ \sum_{n=1}^\infty \lambda_n \log^2 (n) < \infty .$$
    Then the measure $\nu$, obtained from $\mu$ by restriction to the ball $B(0,r)$ of center 0 and radius $r$ and by normalization, is polynomially non-degenerate.
\end{lem}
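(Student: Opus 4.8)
\emph{Proof plan.} Write $Z=\mu(B(0,r))\in(0,1)$, so that $\tfrac{d\nu}{d\mu}=Z^{-1}\1_{B(0,r)}$, and fix an orthonormal eigenbasis $(e_j)$ of the covariance operator, so that under $\mu$ one has $\omega=\sum_j\sqrt{\lambda_j}\,\xi_j e_j$ with $(\xi_j)$ i.i.d.\ standard Gaussian and $\|\omega\|^2=\sum_j\lambda_j\xi_j^2$. Since $\nu$ is compactly supported it has all weak moments and finite dimensional polynomials are dense in $L_2(\nu)$, so the framework applies. The first reduction I would make is to recast the definition as a comparison of the two $L_2$-norms on the finite dimensional spaces $\mathcal P_n$. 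Indeed $J_n^\nu A_n$ is the component of the homogeneous polynomial $\omega\mapsto A_n(\omega,\dots,\omega)$ orthogonal in $L_2(\nu)$ to all polynomials of degree $<n$, which gives the variational identity
\[
\int_B (J_n^\nu A_n)^2\,d\nu=\inf_{R\in\mathcal P_{n-1}}\int_B\big(A_n(\omega,\dots,\omega)-R(\omega)\big)^2 d\nu ,
\]
and likewise for $\mu$. For the Gaussian $\mu$ the Wiener--It\^o isometry yields the exact relation $\int_B(J_n^\mu A_n)^2\,d\mu=n!\,\|A_n\|_n^2$, so $\mu$ is polynomially non-degenerate. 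Hence it suffices to produce constants $0<\kappa_n\le K_n<\infty$ with
\[
\kappa_n\int_B g^2\,d\mu\ \le\ \int_B g^2\,d\nu\ \le\ K_n\int_B g^2\,d\mu\qquad(g\in\mathcal P_n).
\]
Applying this to $g=A_n(\cdot,\dots,\cdot)-R$ and taking the infimum over $R\in\mathcal P_{n-1}$ transfers the two-sided bound to the projections and gives Definition \ref{PNM} with constants $n!\kappa_n$ and $n!K_n$.

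The upper bound is immediate: since $g^2\ge0$ and $d\nu=Z^{-1}\1_{B(0,r)}\,d\mu$, one has $\int_B g^2\,d\nu=\tfrac1Z\int_{B(0,r)}g^2\,d\mu\le\tfrac1Z\int_B g^2\,d\mu$, so $K_n=Z^{-1}$ works for every $n$. Thus the whole statement collapses to the single lower estimate: the ball must capture a fixed fraction of the mass of every degree $n$ polynomial, i.e.\ there is $\kappa_n>0$ with $\int_{B(0,r)}g^2\,d\mu\ge \kappa_n Z\int_B g^2\,d\mu$ for all $g\in\mathcal P_n$.

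To attack this I would split the complement of the ball along the eigenbasis. Put $S_N=\sum_{j\le N}\lambda_j\xi_j^2$ and $T_N=\sum_{j>N}\lambda_j\xi_j^2$, so that $B(0,r)^c\subset\{S_N>r^2/2\}\cup\{T_N>r^2/2\}$. The high frequency term is controlled purely by the trace of the covariance: combining an exponential Chebyshev bound for the $\chi^2$-type sum $T_N$ with the degree $n$ hypercontractivity estimate $\|g\|_{L_{2p}(\mu)}\le(2p-1)^{n/2}\|g\|_{L_2(\mu)}$ and H\"older's inequality gives
\[
\int_{\{T_N>r^2/2\}}g^2\,d\mu\le (2p-1)^{n}\,\mu(T_N>r^2/2)^{1/q}\int_B g^2\,d\mu ,
\]
which, since $\sum_{j>N}\lambda_j\to0$, can be made an arbitrarily small multiple of $\int_B g^2\,d\mu$ by first sending $N\to\infty$ and then optimising in the conjugate exponents $p,q$. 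The low frequency term involves only $\xi_1,\dots,\xi_N$: integrating out the remaining coordinates replaces $g^2$ by the nonnegative polynomial $G_N=\E_\mu[g^2\mid\xi_1,\dots,\xi_N]$ of degree at most $2n$ in $N$ variables, and the required bound becomes $\int_{\{S_N\le r^2/2\}}G_N\,d\mu\ge c_N\int G_N\,d\mu$. In the \emph{fixed} dimension $N$ this is a genuine finite dimensional statement: the set of nonnegative polynomials of degree $\le2n$ with unit integral is convex with trivial recession cone (a nonnegative polynomial of zero integral vanishes $\mu$-a.e., hence identically), therefore compact, and on it the continuous functional $G\mapsto\int_{\{S_N\le r^2/2\}}G\,d\mu$ is strictly positive, because a nonzero polynomial cannot vanish on the open set $\{S_N<r^2/2\}$; its minimum $c_N$ is thus attained and positive.

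The hard part, and the place where the hypothesis $\sum_j\lambda_j\log^2 j<\infty$ is genuinely needed, is that these two steps are coupled: the high frequency estimate forces $N$ to be large, whereas the finite dimensional constant $c_N$ produced by compactness may deteriorate as $N\to\infty$, and the soft argument above gives no control on its rate. The task is therefore to upgrade the qualitative positivity of $c_N$ into a bound uniform in $N$ --- equivalently, a quantitative anti-concentration statement asserting that, tested against degree $n$ polynomials, the constraint $\{S_N\le r^2/2\}$ cannot be evaded uniformly in the number of active coordinates. The natural route is a maximal estimate for the weighted coordinates $\sqrt{\lambda_j}\,\xi_j$ entering $S_N$: the expected square of the maximum of $N$ such terms grows logarithmically, and summing the per-coordinate contributions against the logarithmic weights coming from this maximal bound produces exactly a series of the form $\sum_j\lambda_j\log^2 j$, whose convergence is the stated assumption. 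Once the uniform constant $\kappa_n>0$ is secured in this way, combining it with the arbitrarily small high frequency contribution closes the lower estimate, and by the first reduction this proves that $\nu$ is polynomially non-degenerate.
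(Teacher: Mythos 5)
The paper contains no proof of this lemma --- it is quoted verbatim from \cite{AADFAC} as part of a survey section --- so your proposal has to stand on its own. Its soft part does: the variational identity for $J_n$, the transfer of polynomial non-degeneracy to a two-sided comparison $\kappa_n\int g^2\,d\mu\le\int g^2\,d\nu\le K_n\int g^2\,d\mu$ on $\mathcal P_n$, the trivial upper bound $K_n=Z^{-1}$, the hypercontractive H\"older bound on $\{T_N>r^2/2\}$, and the fixed-dimension compactness argument giving $c_N>0$ are all correct (one slip: the closed ball of an infinite-dimensional Hilbert space is not compact, so $\nu$ is boundedly, not compactly, supported --- harmless for the moments and density you need).

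However, there is a genuine gap, and it sits precisely at the only point where the hypothesis $\sum_n\lambda_n\log^2 n<\infty$ enters. As you yourself concede, your two estimates are coupled: beating the hypercontractive factor $(2p-1)^n$ forces $N=N(n)$ to be large, while the compactness constant $c_N$ comes with no rate and may degenerate as $N\to\infty$. What the lemma actually needs is a quantitative anti-concentration inequality: a constant $c(n)>0$, \emph{uniform in $N$}, such that $\int_{\{S_N\le r^2/2\}}G\,d\mu\ \ge\ c(n)\int_B G\,d\mu$ for every nonnegative polynomial $G$ of degree at most $2n$ in $N$ variables. Your closing paragraph does not prove this; it gestures at ``a maximal estimate for the weighted coordinates'' and asserts that summing per-coordinate contributions ``produces exactly a series of the form $\sum_j\lambda_j\log^2 j$.'' No mechanism is given by which a maximal inequality for $\max_{j\le N}\lambda_j\xi_j^2$ controls the $\mu$-mass that an \emph{arbitrary} nonnegative degree-$2n$ polynomial assigns to the sublevel set of the \emph{sum} $S_N$, uniformly in the number of active coordinates; the logarithmic series appearing in a heuristic bookkeeping is not the same as the inequality being established. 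The seriousness of the gap is visible from a consistency check: everything in your argument up to that point is free of the $\log^2$ condition and would apply to any non-degenerate trace-class covariance, whereas the lemma asserts the conclusion only under $\sum_n\lambda_n\log^2 n<\infty$ --- so the entire substance of the statement is concentrated in the step you have left as ``the task.'' As it stands, the proposal is a sound reduction scheme plus an unproven key claim, not a proof.
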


Now we focus on the notion of finite absolute continuity. Definition \eqref{FAC0} precises the meaning of this concept. In the remainder of this section we assume that the reference measure $\mu$ is polynomially non-degenerate. One of the main examples of finite absolutely continuous measures is provided by the following result.
\begin{thm}\cite{Riabov}
Let $\eta \in \D^{2,-\infty}$ be a positive generalized Wiener function and $\nu $ be the measure on the Wiener space $\mathscr{C}_0\big( [0,1],\R^d\big)$ associated to $\eta$. Then $\nu$ is finitely absolutely continuous with respect to the Wiener measure $\mu_0$.
\end{thm}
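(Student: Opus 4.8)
The plan is to combine Sugita's representation (Theorem \ref{sugita}) with the duality between $\D^{2,\gamma}$ and $\D^{2,-\gamma}$, reducing the whole statement to the elementary fact that a polynomial functional of bounded degree has an It\^o--Wiener expansion supported on finitely many chaoses. Since $\eta \in \D^{2,-\infty}$, fix $\gamma > 0$ with $\eta \in \D^{2,-\gamma}$. Given $n \in \N$, a polynomial $P_n : \R^n \to \R$ of degree at most $n$ and $\ell_1, \ldots, \ell_n \in B^*$, set $F = P_n(\ell_1, \ldots, \ell_n)$. Under $\mu_0$ the variables $\ell_1, \ldots, \ell_n$ are jointly Gaussian, so $F$ is a polynomial in Gaussian variables of degree at most $n$; consequently its expansion $F = \sum_{k=0}^n I_k(f_k)$ terminates at order $n$, and because $(k+1)^\gamma \leqslant (n+1)^\gamma$ for $0 \leqslant k \leqslant n$,
\[ \|F\|_{2,\gamma}^2 = \sum_{k=0}^n (k+1)^\gamma\, \E\, I_k(f_k)^2 \;\leqslant\; (n+1)^\gamma \sum_{k=0}^n \E\, I_k(f_k)^2 \;=\; (n+1)^\gamma \int_B F^2\, d\mu_0 . \]
In particular $F \in \D^{2,+\infty}$, so the pairing $(\eta, F)$ is defined and the duality bound $|(\eta, F)| \leqslant \|\eta\|_{2,-\gamma}\,\|F\|_{2,\gamma}$ holds.

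The heart of the argument is the identity $\int_B F\, d\nu = (\eta, F)$ for every polynomial functional $F = P_n(\ell_1, \ldots, \ell_n)$. Theorem \ref{sugita} delivers this only for bounded smooth cylinder functions $F \in \mathcal{F}\mathscr{C}_b^{\infty}$, whereas $P_n$ is unbounded, so the identity must be reached by approximation. I would first use positivity of $\eta$ together with the representation to show that $\nu$ has weak moments of every order: for an even positive polynomial such as $\ell^{2p}$ one approximates from below by a nondecreasing sequence $0 \leqslant g_m \uparrow \ell^{2p}$ with $g_m(\ell) \in \mathcal{F}\mathscr{C}_b^{\infty}$, applies the representation to each $g_m(\ell)$, and passes to the limit by monotone convergence on the $\nu$-side. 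Controlling the right-hand side $(\eta, g_m(\ell)) \leqslant \|\eta\|_{2,-\gamma}\,\|g_m(\ell)\|_{2,\gamma}$ uniformly in $m$ then forces $\int_B \ell^{2p}\, d\nu < \infty$, whence all weak moments of $\nu$ are finite and $F \in L^1(\nu)$.

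With weak moments in hand, the identity for a general (signed) $F$ follows by choosing cutoff approximations $F_m = \tilde f_m(\ell_1, \ldots, \ell_n) \in \mathcal{F}\mathscr{C}_b^{\infty}$, with $\tilde f_m \to P_n$ pointwise and $|\tilde f_m| \leqslant |P_n| + 1$, that converge to $F$ simultaneously in $L^1(\nu)$ (by dominated convergence, using the moment bound) and in $\D^{2,\gamma}$ (so that $(\eta, F_m) \to (\eta, F)$ by continuity of the pairing). Combining the identity with the two bounds of the first paragraph then yields
\[ \Big| \int_B P_n(\ell_1, \ldots, \ell_n)\, d\nu \Big| = |(\eta, F)| \leqslant \|\eta\|_{2,-\gamma}\,\|F\|_{2,\gamma} \leqslant \|\eta\|_{2,-\gamma}\,(n+1)^{\gamma/2} \Big( \int_B P_n^2(\ell_1, \ldots, \ell_n)\, d\mu_0 \Big)^{1/2}, \]
which is exactly Definition \ref{FAC0} with $c_n = \|\eta\|_{2,-\gamma}\,(n+1)^{\gamma/2}$.

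The main obstacle is precisely the approximation step. Because Sugita's test class consists of bounded functions while polynomials are unbounded, the cutoffs $F_m$ necessarily acquire chaos components of every order, so their Sobolev norms $\|F_m\|_{2,\gamma}$ are no longer controlled by the $L^2(\mu_0)$-norm as in the first display. The delicate point is therefore to establish the uniform Sobolev bound $\sup_m \|F_m\|_{2,\gamma} < \infty$, equivalently the $\D^{2,\gamma}$-convergence of the cutoffs, since this single fact underlies both the finiteness of the weak moments of $\nu$ and the passage to the limit in the pairing; once it is secured, the remaining estimates are the elementary algebra displayed above.
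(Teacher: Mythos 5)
You should first be aware that the paper contains no proof of this statement: it is quoted verbatim from \cite{Riabov} as part of the survey section, so your proposal can only be compared with the strategy of that reference, which it essentially reconstructs. Your skeleton is the right one, and its algebra is correct: since $\eta\in\D^{2,-\infty}=\cup_{\gamma>0}\D^{2,-\gamma}$ you may fix one $\gamma$; under $\mu_0$ the functionals $\ell_1,\dots,\ell_n$ are jointly Gaussian, so $F=P_n(\ell_1,\dots,\ell_n)$ has an It\^o--Wiener expansion terminating at order $n$, giving $\|F\|_{2,\gamma}\leqslant (n+1)^{\gamma/2}\|F\|_{L^2(\mu_0)}$, and duality then yields finite absolute continuity with the constant $c_n=\|\eta\|_{2,-\gamma}(n+1)^{\gamma/2}$, which is independent of $P_n$ and of the $\ell_i$, exactly as Definition \ref{FAC0} requires. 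You are also right that the whole weight of the proof rests on the identity $\int_B F\,d\nu=(\eta,F)$ for unbounded polynomial $F$, which Theorem \ref{sugita} supplies only for $F\in\mathcal{F}\mathscr{C}_b^{\infty}$.

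However, you leave that load-bearing step open: you state that the uniform bound $\sup_m\|F_m\|_{2,\gamma}<\infty$ and the $\D^{2,\gamma}$-convergence of the cutoffs are ``the delicate point'' without establishing them, so as written the proof is incomplete precisely where it matters. The gap is genuine but closeable by a purely finite-dimensional computation, because your approximants can be kept cylindrical over the \emph{same} functionals. After discarding linearly dependent functionals, assume the covariance of $\ell=(\ell_1,\dots,\ell_n)$ under $\mu_0$ is nondegenerate, and let $\rho$ be its (Gaussian) law on $\R^n$. For cylinder functions $g(\ell)$ the chaos decomposition is the Hermite expansion of $g$ in $L^2(\R^n,\rho)$, and for integer $k$ the weighted coefficient norm $\sum_\alpha(|\alpha|+1)^k c_\alpha(g)^2$ is equivalent to $\sum_{|\beta|\leqslant k}\|\partial^\beta g\|^2_{L^2(\rho)}$ (the operator $\partial_i$ lowers the Hermite index $\alpha$ by $e_i$ and multiplies the coefficient by a factor of order $\sqrt{\alpha_i}$). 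Now take $\tilde f_m=P_n\cdot\chi(\cdot/m)$ with $\chi\in\mathscr{C}^\infty$ compactly supported, $0\leqslant\chi\leqslant1$, $\chi\equiv1$ near the origin and radially nonincreasing. Every derivative $\partial^\beta(\tilde f_m-P_n)$ tends to $0$ in $L^2(\rho)$ by dominated convergence, since the Gaussian density absorbs polynomial growth; hence $\tilde f_m(\ell)\to P_n(\ell)$ in $\D^{2,k}$ for any integer $k\geqslant\gamma$, so in $\D^{2,\gamma}$, and in particular the needed uniform bound holds. The same radial choice of $\chi$ makes $g_m=x^{2p}\chi(x/m)$ nondecreasing in $m$, so your monotone-convergence derivation of the weak moments of $\nu$ also goes through. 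With these two insertions your argument is complete and matches the duality route of \cite{Riabov}; note that a blunt truncation such as $(F\wedge m)\vee(-m)$ would \emph{not} work here, being merely Lipschitz and hence controlled only in $\D^{2,1}$, which is insufficient when $\gamma>1$ --- this is why the smooth cylindrical cutoff is essential.
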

Finite absolute continuity may imply absolute continuity in certain cases.
\begin{example}\cite{AADFAC}
    The sequence $(c_n)_{n\geqslant0}$ in definition \eqref{FAC0} can be chosen bounded if, and only if, $\nu << \mu$ and 
    $$ \dfrac{d\nu}{d\mu} \in L_2\big( B,\mu \big). $$
\end{example}

\begin{lem}\cite{AADFAC}
Suppose that $\nu$ and $\mu$ are Gaussian measures in a real separable Hilbert space $B$ that have the same correlation operator $S$ and mean values $0$ and $h$, respectively. If $\nu <<_0 \mu$ then $\nu << \mu$.
\end{lem}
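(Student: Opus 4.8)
The plan is to reduce the statement to a membership condition on the shift vector $h$ and then to invoke the Cameron--Martin theorem. Since $\nu$ and $\mu$ share the correlation operator $S$, they have a common Cameron--Martin space $H=S^{1/2}(B)$, and classical Gaussian theory tells us that the translate of the centered measure $\nu$ by $h$ is equivalent to $\nu$ exactly when $h\in H$; as $\mu$ is precisely this translate (mean $h$, same covariance), it suffices to show that finite absolute continuity forces $h\in H$. First I would diagonalize $S$: writing $\{e_k\}$ for an orthonormal eigenbasis with eigenvalues $\lambda_k>0$ (degenerate directions are discarded by restricting to the support), I introduce the whitened coordinates $\xi_k(\omega)=\lg \omega, e_k\rg/\sqrt{\lambda_k}$. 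Under $\nu$ these are i.i.d. standard Gaussians, while under $\mu$ they are independent with $\xi_k$ of normal law $N(a_k,1)$, where $a_k=\lg h,e_k\rg/\sqrt{\lambda_k}$. The membership $h\in H$ is then equivalent to $\sum_k a_k^2<\infty$, so the whole problem collapses to the estimate $\sum_k a_k^2<\infty$.

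The heart of the argument is to extract this estimate from the case $n=1$ of Definition \ref{FAC0}, using a single cleverly chosen functional rather than many. For each $N$ I would set $\ell_N=\sum_{k=1}^N a_k\,\xi_k\in B^*$ (a finite combination, hence a bounded functional) and take the affine polynomial $P(x)=x-m_N$ with $m_N=\sum_{k=1}^N a_k^2=\E_\mu\,\ell_N$. A direct computation gives $\int_B P(\ell_N)\,d\nu=-\sum_{k=1}^N a_k^2$, because $\ell_N$ is centered under $\nu$, and $\int_B P(\ell_N)^2\,d\mu=\Var_\mu(\ell_N)=\sum_{k=1}^N a_k^2$. Feeding these into the finite absolute continuity inequality with $n=1$ yields $\sum_{k=1}^N a_k^2\le c_1\big(\sum_{k=1}^N a_k^2\big)^{1/2}$, that is $\sum_{k=1}^N a_k^2\le c_1^2$. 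Since $c_1$ does not depend on $N$, letting $N\to\infty$ gives $\sum_k a_k^2\le c_1^2<\infty$, hence $h\in H$.

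Finally I would conclude by the Cameron--Martin theorem: $h\in H$ implies that $\mu$, being the translate of the centered Gaussian $\nu$ by the admissible shift $h$, is equivalent to $\nu$, and in particular $\nu<<\mu$. The point requiring the most care is the choice of the test functional: a naive approach spreading the coordinates over $N$ distinct functionals $\ell_1,\dots,\ell_N$ would force the use of the constant $c_N$ from the level $n=N$, and since $(c_n)_n$ may grow this would fail to produce a uniform bound. Packing all $N$ coordinates into the single functional $\ell_N$ is exactly what keeps us at the level $n=1$ with the fixed constant $c_1$, delivering a bound uniform in $N$. The remaining points — the reduction to the nondegenerate case and the identification of $h\in H$ with $\sum_k a_k^2<\infty$ — are standard.
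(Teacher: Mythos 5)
Your proof is correct. Note that the paper states this lemma without proof (it is quoted from \cite{AADFAC}), so there is no in-paper argument to compare against; your route --- reducing the claim via the Hajek--Feldman/Cameron--Martin dichotomy to the membership $h\in S^{1/2}(B)$, i.e.\ $\sum_k a_k^2<\infty$, and extracting that bound from the $n=1$ case of Definition \ref{FAC0} --- is the natural one. Your key observation is exactly right: packing the first $N$ whitened coordinates into the \emph{single} functional $\ell_N=\sum_{k=1}^N a_k\xi_k\in B^*$ keeps the inequality at level $n=1$ with the fixed constant $c_1$, whereas distributing them over $N$ functionals would invoke $c_N$ and destroy uniformity. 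The computation checks out: since the $\xi_k$ are uncorrelated with unit variance under both measures, $\int_B P(\ell_N)\,d\nu=-m_N$ and $\int_B P(\ell_N)^2\,d\mu=\Var_\mu(\ell_N)=m_N$, so the $n=1$ inequality gives $m_N\le c_1\,m_N^{1/2}$, hence $\sum_k a_k^2\le c_1^2$ and $h\in S^{1/2}(B)$; Cameron--Martin then yields $\nu\sim\mu$, in particular $\nu<<\mu$.

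One point you should make explicit rather than dispose of as ``restricting to the support'': when $\ker S\neq\{0\}$, the supports of $\nu$ and $\mu$ are $\overline{S(B)}$ and $h+\overline{S(B)}$, which coincide only if $h\in\overline{S(B)}$, and this must itself be derived from the hypothesis \emph{before} the whitened coordinates $\xi_k$ make sense. Fortunately your own device settles it in one line: for a unit vector $e\in\ker S$ with $a=\langle h,e\rangle\neq 0$, take $\ell=\langle\cdot\,,e\rangle$ and $P(x)=x-a$; then $\ell=a$ $\mu$-a.s.\ so $\int_B P(\ell)^2\,d\mu=0$, while $\ell=0$ $\nu$-a.s.\ so $\int_B P(\ell)\,d\nu=-a$, and the $n=1$ inequality forces $a=0$. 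Thus $h\perp\ker S$, i.e.\ $h\in(\ker S)^{\perp}=\overline{S(B)}$, and the reduction to positive eigenvalues is legitimate. With that line added, the argument is complete.
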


If a measure $\nu$ is finitely absolutely continuous with respect to $\mu$ then it is possible to obtain a chaotic expansion of $\nu$ with respect to $\mu$ in the sense precised by the following theorem.

\begin{thm}\cite{AADFAC}
    Let $\nu$ be a probability measure on $B$ which is finitely absolutely continuous with respect to $\mu$.  Then there exists a sequence of kernels $A_n\in H_{n,s}$ such that for any polynomial $Q$ defined on $B$ the following equality holds 
    $$ \int_B Q(\omega) \nu (d\omega) =\sum_{n=0}^\infty \int_B Q(\omega)A_n(\omega) \mu (d\omega) . $$
\end{thm}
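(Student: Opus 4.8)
The plan is to realize the linear functional $Q \mapsto \int_B Q\,d\nu$, which is a priori defined only on polynomials, as integration against an $L_2(\mu)$-element on each finite chaos level, and then to read off its chaos components as the kernels $A_n$. The role of finite absolute continuity is to produce the representing elements, while polynomial non-degeneracy of $\mu$ is what lets us turn those elements into genuine symmetric forms.

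First I would fix $n$ and consider the linear functional $\Lambda_n(Q)=\int_B Q\,d\nu$ on the space $\mathcal{P}_n$ of polynomials of degree at most $n$ (all integrals being finite since $\nu$ has weak moments of every order). The hypothesis $\nu <<_0 \mu$ of Definition \ref{FAC0}, extended from the generating family to all of $\mathcal{P}_n$, is exactly what makes $\Lambda_n$ bounded for the $L_2(\mu)$-norm: one has $|\Lambda_n(Q)|\leqslant c_n\,\|Q\|_{L_2(\mu)}$ for every $Q\in\mathcal{P}_n$. Hence $\Lambda_n$ extends uniquely to a bounded linear functional on the closed subspace $\overline{\mathcal{P}_n}=\bigoplus_{k=0}^n K_k$ of $L_2(B,\mu)$, and by the Riesz representation theorem there is a unique $g_n\in\overline{\mathcal{P}_n}$ with $\int_B Q\,d\nu=\int_B Q\,g_n\,d\mu$ for every $Q\in\overline{\mathcal{P}_n}$.

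The second step is to verify that these representatives are coherent across levels, so that their chaos components stabilize. For $m>n$ and any $Q\in\overline{\mathcal{P}_n}\subset\overline{\mathcal{P}_m}$ both $g_n$ and $g_m$ represent the same functional on $\overline{\mathcal{P}_n}$, whence $\int_B Q\,(g_m-g_n)\,d\mu=0$; since $g_n\in\overline{\mathcal{P}_n}$ this says precisely that $g_n$ is the orthogonal projection of $g_m$ onto $\overline{\mathcal{P}_n}$, and applying the projections $J_k$ forces $J_k g_m=J_k g_n$ for all $k\leqslant n$. Thus the element $a_k:=J_k g_k\in K_k$ is independent of the truncation level, and for any polynomial $Q$ of degree $N$ we may write $\int_B Q\,d\nu=\int_B Q\,g_N\,d\mu=\sum_{k=0}^{N}\lg J_k Q,a_k\rg_{L_2(\mu)}=\sum_{k=0}^{\infty}\int_B Q\,a_k\,d\mu$, the series being in fact finite because $\int_B Q\,a_k\,d\mu=\lg J_k Q,a_k\rg=0$ as soon as $k>N$. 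This already disposes of any convergence question for polynomial test functions.

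It remains to convert each chaos element $a_k\in K_k$ into a symmetric form $A_k\in H_{k,s}$, and here the standing assumption that $\mu$ is polynomially non-degenerate is indispensable. By Definition \ref{PNM}, the linear map $H_{k,s}\to K_k$ sending $A$ (read on the diagonal as a homogeneous polynomial of degree $k$) to $J_k A$ satisfies the two-sided bounds $c_k\,\|A\|_k^2\leqslant\|J_k A\|_{L_2(\mu)}^2\leqslant C_k\,\|A\|_k^2$, so it is a linear isomorphism onto $K_k$. I would therefore define $A_k$ to be the unique element of $H_{k,s}$ with $J_k A_k=a_k$ and interpret the symbol $A_k(\omega)$ in the statement as this chaos element; substituting into the display above gives the asserted identity. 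I expect the main obstacle to be exactly this last identification, since without polynomial non-degeneracy the map $A\mapsto J_k A$ need not be injective and one could not recover a bona fide kernel $A_k$ from the analytically produced representative $g_n$; the coherence argument of the second step is the other delicate point, as it is what makes the kernels canonical and independent of the chosen truncation.
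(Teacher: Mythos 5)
First, a point of comparison: the paper itself gives no proof of this theorem --- it is quoted verbatim from \cite{AADFAC} in the survey section --- so your argument can only be measured against the statement and the paper's surrounding definitions. Your architecture is the natural one and, in substance, the standard proof: boundedness of $\Lambda_n(Q)=\int_B Q\,d\nu$ on degree-$n$ polynomials via $\nu<<_0\mu$, Riesz representation on $\overline{\mathcal{P}_n}$, the coherence computation showing $g_n$ is the orthogonal projection of $g_m$ so that the chaos components $a_k=J_kg_k$ stabilize, and finally the use of polynomial non-degeneracy (Definition \ref{PNM}) to invert $A\mapsto J_kA$ and recover a kernel $A_k\in H_{k,s}$ from $a_k\in K_k$. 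The coherence step and the isomorphism step are both handled correctly; note that you silently use the standard grading $\overline{\mathcal{P}_n}=\bigoplus_{k=0}^nK_k$, which quietly corrects an off-by-one in the paper's definition of $K_n$ (there $K_n$ is the complement of $\overline{\mathcal{P}_n}$ in $\overline{\mathcal{P}_{n+1}}$), and this is the sensible reading.

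The one genuine soft spot is at the beginning, not where you locate it. Definition \ref{FAC0} bounds only \emph{cylindrical} polynomials $P_n(\ell_1(\omega),\dots,\ell_n(\omega))$, while both the theorem and your $\Lambda_n$ quantify over all polynomials on $B$; you dispose of this with the phrase ``extended from the generating family to all of $\mathcal{P}_n$,'' but that extension is not formal. For a general continuous symmetric $k$-form $A$, the $\nu$-integrability of $\omega\mapsto A(\omega,\dots,\omega)$ does not follow from weak moments of $\nu$ alone (one only has $|A(\omega,\dots,\omega)|\leqslant\|A\|\,\|\omega\|^k$, and $\nu$ is not assumed to have strong moments), and approximating $A$ by cylindrical forms simultaneously in $L_2(\mu)$ and in $L_1(\nu)$ requires an argument --- a separable Banach space need not even have the approximation property, so finite-rank reductions are not free; this is precisely where the identification of $H_{k,s}$ with the symmetric part of $H_2^{\otimes k}$ and the moment assumptions on $\mu$ have to be exploited, as in \cite{AADFAC}. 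Two smaller points: the extension of $\Lambda_n$ to $\overline{\mathcal{P}_n}$ cannot literally be written as $Q\mapsto\int_BQ\,d\nu$ for non-polynomial $Q$, since elements of $\overline{\mathcal{P}_n}$ are only $\mu$-equivalence classes and $\nu$ need not be absolutely continuous with respect to $\mu$ (harmless here, as you evaluate the identity only at genuine polynomials, but worth saying); and your reading of the symbol $A_n(\omega)$ as the chaos element $J_nA_n$ rather than the raw diagonal restriction $A_n(\omega,\dots,\omega)$ is a reinterpretation of the statement that should be made explicit, since with raw diagonals the displayed identity would require resumming the lower-order projections of each $A_n$.
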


\section{Existence of the generalized functional}
In this section we present conditions under which the family $\Phi_\ve$ has a limit which is a generalized functional. Begin with the statement that uniform finite absolute continuity of family of probability measures leads to weak compactness of this family. We consider two cases. When $B$ is a Hilbert space or the space $\mathscr{C}\big( [0,1],\R^d\big)$. Such choice is enough for our purposes. Let us start from the Hilbert space. Suppose that $\mu$ is polynomially non-degenerate centered probability measure on the Hilbert space $H$ such that 
$$ \forall \; h\in H \,:\; \int_H (h,u)^2\mu(du) >0. $$
\begin{thm}\label{thm1}
    Let the family of probability measures $\{ \mu_\alpha,\, \alpha \in \Theta \}$ on $H$ satisfies conditions 
    \begin{enumerate}
        \item for every $\alpha \in \Theta$, $\mu_\alpha$ has all moments
        \item for every $\alpha \in \Theta$, $\mu_\alpha <<_0 \mu$ with the constants $\{ c_n^\alpha,\, n\geqslant0 \}$
        \item for every $n \geqslant 0$ 
        $$ \sup _{\alpha \in \Theta} c_n^\alpha :=c_n <\infty. $$
    \end{enumerate}
    Then the family $\{ \mu_\alpha,\, \alpha \in \Theta \}$ is weakly compact.
\end{thm}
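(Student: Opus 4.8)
The plan is to obtain weak compactness from Prokhorov's theorem. Since $H$ is a separable Hilbert space, hence a Polish space, a family of probability measures is relatively weakly compact if and only if it is tight, so the whole task reduces to proving that $\{\mu_\alpha,\ \alpha\in\Theta\}$ is uniformly tight. Fix an orthonormal basis $(e_k)_{k\geqslant1}$ of $H$ and write $\sigma_k^2=\int_H (u,e_k)^2\,\mu(du)$; since $\mu$ has all finite moments of the norm, $\sum_k\sigma_k^2=\int_H\|u\|^2\,\mu(du)<\infty$. The naive idea of applying Definition \ref{FAC0} directly to $\|u\|^2=\sum_k(u,e_k)^2$ fails, because this involves infinitely many functionals while the constant $c_n$ in finite absolute continuity is allowed to grow with the number $n$ of functionals. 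The remedy is to work one coordinate at a time.

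Indeed, for each fixed $k$, Definition \ref{FAC0} with $n=2$, the functional $(\cdot,e_k)\in H^*$ and the degree-two polynomial $P(x)=x^2$ gives, uniformly in $\alpha$,
$$\int_H (u,e_k)^2\,\mu_\alpha(du)\leqslant c_2^\alpha\Big(\int_H(u,e_k)^4\,\mu(du)\Big)^{1/2}\leqslant c_2\Big(\int_H(u,e_k)^4\,\mu(du)\Big)^{1/2}=:s_k,$$
where the passage to a single constant uses hypothesis (3), $\sup_\alpha c_2^\alpha=c_2<\infty$; crucially, the same $c_2$ serves every coordinate. I would then invoke polynomial non-degeneracy of $\mu$ (Definition \ref{PNM}) to make the numbers $s_k$ summable. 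Decomposing $(u,e_k)^2=\sigma_k^2+J_1[(u,e_k)^2]+J_2[(u,e_k)^2]$ into its chaos components, the degree-two part corresponds to the rank-one symmetric form $\ell_k\otimes\ell_k$ with $\ell_k=(\cdot,e_k)$, whose norm in $H_{2,s}$ equals $\|\ell_k\|_{K_1}^2=\sigma_k^2$; the upper inequality in Definition \ref{PNM} then yields $\big\|J_2[(u,e_k)^2]\big\|_{L_2(\mu)}^2\leqslant C_2\,\sigma_k^4$. Hence $\int_H(u,e_k)^4\,\mu(du)\leqslant(\mathrm{const})\,\sigma_k^4$ (for symmetric $\mu$ the degree-one component vanishes, and in general it is controlled by the same device), so $s_k\leqslant(\mathrm{const})\,\sigma_k^2$ and
$$\sum_{k\geqslant1}s_k\leqslant(\mathrm{const})\sum_{k\geqslant1}\sigma_k^2=(\mathrm{const})\int_H\|u\|^2\,\mu(du)<\infty.$$

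Finally I would convert this uniform, summable second-moment bound into tightness through the standard compactness criterion in a separable Hilbert space: the relatively compact sets are exactly the bounded sets with uniformly small tails $\sum_{k>N}(u,e_k)^2$. Given $\ve>0$, Chebyshev's inequality together with $\sup_\alpha\int_H\sum_{k>N}(u,e_k)^2\,\mu_\alpha(du)\leqslant\sum_{k>N}s_k\to0$ lets me choose, for each $j$, an index $N_j$ with $\sup_\alpha\mu_\alpha\big(\sum_{k>N_j}(u,e_k)^2>\delta_j\big)<\ve\,2^{-j}$, and an $R$ with $\sup_\alpha\mu_\alpha(\|u\|>R)<\ve/2$ (using $\sup_\alpha\int\|u\|^2\mu_\alpha(du)\leqslant\sum_k s_k$). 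The ellipsoid $K=\{\|u\|\leqslant R\}\cap\bigcap_j\{\sum_{k>N_j}(u,e_k)^2\leqslant\delta_j\}$ is then compact and satisfies $\mu_\alpha(K)\geqslant1-2\ve$ for all $\alpha$, which is tightness.

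I expect the main obstacle to be precisely the passage carried out in the second paragraph: finite absolute continuity is an inherently finite-dimensional comparison of weak moments whose constant deteriorates as the number of test functionals increases, so it cannot be applied to the norm. The decisive observation is that localizing to a single coordinate keeps the constant fixed at $c_2$, after which polynomial non-degeneracy of the reference measure $\mu$ is what forces the coordinate fourth moments to be comparable to the squared second moments and therefore summable. Without this structural link between the second and fourth moments of $\mu$, the per-coordinate estimates would not assemble into the uniform tail bound needed for tightness.
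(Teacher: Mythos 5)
Your proof is correct and takes essentially the same route as the paper: both arguments apply Definition~\ref{FAC0} coordinate-wise with the degree-two polynomial $x^2$ and the uniform constant $c_2$, then use polynomial non-degeneracy of $\mu$ to dominate the coordinate fourth moments by a constant times $\sigma_k^4$, yielding uniformly summable second moments of $\mu_\alpha$ with uniformly small tails. The only differences are inessential: the paper finishes by citing the Araujo--Gin\'e criterion for weak compactness in a Hilbert space, whereas you unpack that criterion by hand (compact ellipsoid plus Chebyshev plus Prokhorov), and the paper simply asserts the fourth-to-second moment inequality as a known consequence of Definition~\ref{PNM} --- precisely the statement your chaos-decomposition sketch (modulo the $J_1$ term you yourself flag) attempts to rederive.
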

\begin{proof}
    Since $\mu$ is polynomially non-degenerate then there exists a constant $c>0$ such that
    $$\forall \; h\in H,\quad  \int_H (h,u)^4\mu(du) \leqslant c^2 \Big( \int_H (h,u)^2\mu(du) \Big)^2. $$
Let $S$ be the covariance operator of the measure $\mu$. Denote by $\{e_k;\, k\geqslant 1\}$ the orthonormal eigenbasis of $S$. Then, for every $\alpha \in \Theta$ 
\begin{equation}\label{Pro}
    \sum_{k=1}^\infty \int_H (e_k,u)^2\mu_\alpha(du)\leqslant c_2^\alpha \sum_{k=1}^\infty \sqrt{\int_H (e_k,u)^4\mu(du) }\leqslant c c_2^\alpha \sum_{k=1}^\infty \int_H (e_k,u)^2\mu(du)
\end{equation}
    In the same way it can be checked that 
    \begin{equation}
        \label{Prof}
        \sup_{\alpha \in \Theta} \sum_{k=n}^\infty \int_H (e_k,u)^2\mu_\alpha(du) \xrightarrow[ n \to \infty]{} 0
    \end{equation}
    It is known \cite{Araja} that \eqref{Pro} and \eqref{Prof} are sufficient for the weak compactness of $\{ \mu_\alpha,\, \alpha \in \Theta \}$. \\
    Theorem is proved.
\end{proof}
\begin{remark}
    As it can be seen from the proof, only second and fourth moments were used but we formulate the theorem in a frame of our main considerations.
\end{remark}

Now consider the space $B=\mathscr{C}\big( [0,1],\R^d\big)$. Suppose that the measure $\mu$ is centered and non-degenerated. 

\begin{thm}
    Suppose that the family $\{ \mu_\alpha,\, \alpha \in \Theta \}$ satisfies conditions of Theorem \eqref{thm1} and $\mu$ satisfies the condition
    \begin{equation}
        \exists \, c>0,\, \exists \, \gamma >0 \, : \; \forall \; t_1,t_2 \in [0,1] \, : \; \int_B \big \| u(t_2)-u(t_1) \big \| ^2 \mu(du) \leqslant c \big | t_2-t_1 \big| ^\gamma
    \end{equation}
    Then the family $\{ \mu_\alpha,\, \alpha \in \Theta \}$ is weakly compact in $B$.
\end{thm}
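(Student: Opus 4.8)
The plan is to deduce weak compactness from tightness via Prokhorov's theorem (the space $\mathscr{C}\big([0,1],\R^d\big)$ is Polish) and to establish tightness through the Kolmogorov--Chentsov criterion. Recall that a family of probability measures on $\mathscr{C}\big([0,1],\R^d\big)$ is tight provided that (a) the laws of the initial value $u(0)$ under $\mu_\alpha$ form a tight family on $\R^d$, and (b) there are constants $p>0$, $\beta>0$ and $K>0$, all independent of $\alpha$, such that
\[
\int_B \big\|u(t)-u(s)\big\|^{p}\,\mu_\alpha(du)\ \leqslant\ K\,|t-s|^{1+\beta}\qquad(s,t\in[0,1]).
\]
Thus the whole argument reduces to transferring suitable moment bounds from the reference measure $\mu$ to every $\mu_\alpha$, and the imposed assumption on $\mu$ controls second moments of increments only up to the exponent $\gamma$, which may be $\leqslant 1$; this forces us to work with high moments rather than with second moments alone.

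The transfer combines finite absolute continuity with non-degeneracy. Fix $s,t\in[0,1]$; for $1\leqslant j\leqslant d$ the evaluation $u\mapsto \ell_j(u):=u_j(t)-u_j(s)$ belongs to $B^*$, so $\|u(t)-u(s)\|^{2m}=\big(\sum_{j=1}^d \ell_j(u)^2\big)^m$ is a polynomial of degree $2m$ in the $d$ functionals $\ell_1,\dots,\ell_d$. I would first invoke the non-degeneracy of $\mu$, in the form of equivalence of the $L_p(\mu)$ norms on polynomials of bounded degree in the $\ell_j$ (for the fourth moment this is exactly the estimate used in the proof of Theorem \ref{thm1}), to produce a constant $C_m$ with
\[
\int_B \big\|u(t)-u(s)\big\|^{4m}\,\mu(du)\ \leqslant\ C_m\Big(\int_B \big\|u(t)-u(s)\big\|^{2}\,\mu(du)\Big)^{2m}\ \leqslant\ C_m\,c^{2m}\,|t-s|^{2m\gamma},
\]
the last step being the hypothesis on $\mu$. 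Applying Definition \ref{FAC0} (condition~2 of Theorem \ref{thm1}) with $n=\max(d,2m)$, the uniform constant $c_n^\alpha\leqslant c_n$, and the nonnegative polynomial $\big(\sum_j x_j^2\big)^m$ then yields
\[
\int_B \big\|u(t)-u(s)\big\|^{2m}\,\mu_\alpha(du)\ \leqslant\ c_n\Big(\int_B \big\|u(t)-u(s)\big\|^{4m}\,\mu(du)\Big)^{1/2}\ \leqslant\ c_n\,\sqrt{C_m}\,c^{m}\,|t-s|^{m\gamma}.
\]
Choosing any integer $m>1/\gamma$ and setting $p=2m$, $1+\beta=m\gamma>1$, condition (b) holds with a constant independent of $\alpha$.

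Condition (a) is obtained in the same way but more cheaply: since $u\mapsto u_j(0)$ lies in $B^*$, finite absolute continuity gives $\int_B \|u(0)\|^2\,\mu_\alpha(du)\leqslant c_n\big(\int_B\|u(0)\|^4\,\mu(du)\big)^{1/2}=:M<\infty$, a bound depending only on $\mu$; by Chebyshev's inequality the laws of $u(0)$ are then uniformly tight on $\R^d$. With (a) and (b) verified, the Kolmogorov--Chentsov criterion yields tightness of $\{\mu_\alpha,\ \alpha\in\Theta\}$, hence, by Prokhorov's theorem, weak compactness.

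I expect the main obstacle to be the first displayed inequality, namely extracting from polynomial non-degeneracy a genuine hypercontractivity-type equivalence of the higher $L_{4m}(\mu)$ norm with the $L_2(\mu)$ norm on the finite-dimensional polynomial spaces generated by the increment functionals $\ell_j$; the case $m=1$ is precisely the fourth-moment estimate that drives the proof of Theorem \ref{thm1}, and the task is to see that it upgrades uniformly to arbitrary degree. Once that estimate is secured, the remaining bookkeeping—matching the degree $2m$ against the number $d$ of functionals in Definition \ref{FAC0}, and checking that every constant stays uniform in $\alpha$—is routine.
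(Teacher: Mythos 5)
Your proposal is correct and follows essentially the same route as the paper: the paper likewise upgrades the polynomial non-degeneracy of $\mu$ to an $L_{2m}(\mu)$--$L_2(\mu)$ moment equivalence for functionals $\varphi\in B^*$ (asserted ``similarly to the previous proof''), picks $m_0$ with $m_0\gamma>1$, transfers the increment bound to every $\mu_\alpha$ via the uniform finite absolute continuity constants exactly as in your second display, adds the uniform bound $\sup_\alpha\int_B\|u(0)\|^2\,\mu_\alpha(du)<\infty$, and concludes tightness by the classical criterion from Billingsley. The only cosmetic difference is that you name Kolmogorov--Chentsov/Prokhorov explicitly and track the $4m$ versus $2m$ bookkeeping (and the padding of Definition \ref{FAC0} to $n=\max(d,2m)$ functionals) in more detail than the paper does.
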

\begin{proof}
    Similarly to the previous proof it can be checked that for an arbitrary $m\geqslant 1$ there exists $a_m>0$ such that for every $\varphi \in B^*$ 
    $$ \int_B \varphi(u)^{2m} \mu(du) \leqslant a_m \Bigg( \int_B \varphi(u)^2\mu(du) \Bigg) ^m. $$
    Then it follows from the condition of the theorem that for some $m_0\geqslant 1$ 
    $$ \int_B \big \| u(t_2)-u(t_1) \big \| ^{2m_0} \mu(du) \leqslant a_{m_0} c^{m_0}  \big | t_2-t_1 \big| ^{1+\beta} $$
    where $\beta>0$. Now from uniform finite absolute continuity one can conclude that 
    \begin{equation} \label{abcd} \sup _{\alpha \in \Theta } \int_B \big \| u(0) \big \| ^{2} \mu_\alpha(du) < \infty  
    \end{equation}
    and for some $D >0$ 
    \begin{equation}
        \label{99}
        \forall \; t_1,t_2 \in [0,1] \, : \; \sup _{\alpha \in \Theta } \int_B \big \| u(t_2)-u(t_1) \big \| ^{2m_0} \mu_\alpha (du) \leqslant D \big | t_2-t_1 \big| ^{1+\beta} 
    \end{equation}
    It is known \cite{Bill} that \eqref{abcd} and \eqref{99} are sufficient for weak compactness of $\{ \mu_\alpha,\, \alpha \in \Theta \}$. \\
    Theorem is proved.
\end{proof}

\begin{remark}
    In view of weak compactness, to check the existence of the limiting generalized functional is not difficult. If all the integrals with finite-dimensional polynomials for $\Phi_\ve$ converge when $\ve$ tends to $0$ then the weak limit $\Phi_0$ of $\Phi_\ve$ exists. Due to uniform finite absolute continuity, $\Phi_0$ is also finitely absolutely continuous with respect to initial measure. Hence in the next section we do not repeat these arguments at the end of every example.
\end{remark}

\section{Examples of universal families}

In this section we consider concrete approximating families and their limits. Let us start from the generalized local time. 
\begin{example} \label{Ex1}
    
Suppose that $\xi(t),\, t\in [0,1]$ is a centered Gaussian process such that 
\begin{enumerate}
    \item $\forall \, t\in [0,1],\; \E \xi^2(t)=\sigma ^2(t)>0$
    \item $\exists \, c>0,\, \exists \, \gamma >0, \; \forall \; t_1,t_2 \in [0,1] \, : \; \E \big | \xi(t_2)-\xi(t_1) \big | ^2  \leqslant c \big | t_2-t_1 \big| ^\gamma$.
\end{enumerate}
Note that under these conditions $\xi$ has a continuous modification. Consequently, the distribution $\mu$ of $\xi$ is a centered Gaussian measure in $\mathscr{C}\big( [0,1],\R\big)$. We recall that every Gaussian measure is polynomially non-degenerate. Now consider for $\ve >0$ the functional on  $\mathscr{C}\big( [0,1],\R\big)$ defined by 
$$ \Phi_\ve (f)=\int_0^1 p_\ve  \big( f(t) \big) dt. $$
Let us check that the measures $\mu_\ve =\Phi_\ve \mu$ are uniformly finitely absolutely continuous with respect to $\mu$. Consider a finite dimensional polynomial $P_n$ of degree $n$ on $\mathscr{C}\big( [0,1],\R\big)$. Then 
\begin{align*}
   \Big| \int_{ \mathscr{C}\big( [0,1],\R\big)} \Phi_\ve(f)P_n(f) \mu(df) \Big| 
   &= \Big|\E \big( \Phi_\ve(\xi) P_n(\xi) \big)      \Big|\\
   &=\Big|\E  \int_0^1 p_\ve \big( \xi(t) \big) dt \, P_n(\xi)   \Big|\\
   &=\Big|  \int_0^1  \E \Big[ p_\ve \big( \xi(t) \big) \, \E \big(  P_n(\xi) \big| \xi(t) \big) \Big]   dt   \Big|\\
   &\leqslant \int_0^1 \E \Big[ p_\ve  \big( \xi(t) \big) \, \Big|\E \big(  P_n(\xi) \big| \xi(t) \big)\Big| \Big]   dt   \\
   &\leqslant \int_0^1 \sup_{ \R} p_\ve   *  \Big|   \big( p_{ \sigma^2(t) } \cdot Q_n\big) (\cdot ,t)\Big|dt.
\end{align*}
Here 
$$ Q_n(x,t)= \E \Big(  P_n(\xi) \Big| \xi(t)=x \Big) $$
is a polynomial of degree not greater than $n$. Using Newton-Leibniz formula and Cauchy inequality it can be checked that, when
$$ \min _{t\in[0,1]} \sigma^2(t) >0,$$
\begin{align*}
    \max_{\R} \Big | p_{ \sigma^2(t) } \cdot Q_n \Big| 
    & \leqslant c\, \sqrt{ \int_\R p_{ \sigma^2(t) }(x) \cdot Q_n^2(x) dx   } \\
    &= c \, \sqrt{ \E \Bigg[  \Big( \E \big(  P_n(\xi) \big| \xi(t) \big) \Big)^2 \Bigg] }\\
    &\leqslant c\, \sqrt{ \E \Big( P_n^2(\xi) \Big) }.
\end{align*} 
This finishes the proof of uniform finite absolute continuity.

\end{example}

\begin{example} \label{bbb}
Let $w(t)=(w_1(t),\cdots,w_d(t)) $ be a $d$-dimensional Brownian motion and $A$ an invertible bounded linear operator in the Hilbert space $ L_{2}([0,1]) $. Define a multidimensional Gaussian integrator by
\begin{equation}
\label{integrator}
     X(t)=\big( X_1(t),\cdots,X_d(t) \big),\quad X_j(t)=\int_0^1 \big( A \mathbf{1}_{[0,t]} \big) (s) dw_j(s),\; j=1,\cdots,d
\end{equation}
The distribution $\mu$ of $X$ is a centered Gaussian measure in $B=\mathscr{C}_0\big( [0,1],\R^d\big)$. Consider for $\ve >0$ the functional on  $B$ defined by 
$$ F_\ve (f)=\int_0^1 p_\ve^d  \big( f(t)-u \big) dt,\, u\in \R^d\setminus \{0\}.$$
Let us check that the measures $\mu_\ve =F_\ve \mu$ are uniformly finitely absolutely continuous with respect to $\mu$. Consider a finite dimensional polynomial $P_n$ of degree $n$ on $B$. Similarly to Example  \eqref{Ex1} one can check that 
$$\Big| \int_{B} F_\ve(f)P_n(f) \mu(df) \Big| 
   \leqslant \int_0^1 \E \Big[ p_\ve ^d \big( X(t) -u\big) \, \Big|\E \big(  P_n(X) \big| X(t) \big)\Big| \Big]   dt   \leqslant \int_0^1  p^d _\ve  *  \Big|     p_{ \sigma^2(t)}^d \cdot Q_n (\cdot ,t)\Big|(u)dt $$
where 
$$ Q_n(x,t)= \E \Big(  P_n(X) \Big| X(t)=x \Big) $$
is a polynomial of degree not greater than $n$, and 
$$ \sigma(t)=\|A\mathbf{1}_{[0,t]} \|. $$
Since $A$ is invertible then there exist $0<m<M$ such that
\begin{equation} \label{aaaa}
    \sqrt{m}\sqrt{t} \leqslant \sigma(t) \leqslant \sqrt{M} \sqrt{t}
\end{equation} 
Let us recall that for $\sigma^2 >0$ the Hilbert space $L_2\Big( \R^d, p_{\sigma^2}^d(x)dx\Big) $ has an orthonormal basis : 
$$ R_{n_1,\cdots,n_d}(x)=\sigma^{n_1+\cdots+n_d} \prod_{j=1}^d H_{n_j} \Big( \dfrac{x_j}{\sigma} \Big),\; n_1,\cdots,n_d \in \N. $$
Consequently, 
$$ Q_n(x,t)=\sum_{n_1+\cdots+n_d\leqslant n} \alpha_{n_1,\cdots,n_d}(t)R_{n_1,\cdots,n_d}(x)  $$
and therefore

\begin{align*}
 \Big | p_{ \sigma^2(t) }^d(x) \cdot Q_n(x,t) \Big|
    &=\Big | p_{ \sigma^2(t) }^d (x)\; \sum_{n_1+\cdots+n_d\leqslant n} \alpha_{n_1,\cdots,n_d}(t)R_{n_1,\cdots,n_d}(x) \Big|\\
    & \leqslant p_{ \sigma^2(t) }^d (x) \;\sqrt{ \sum_{n_1+\cdots+n_d\leqslant n} \alpha_{n_1,\cdots,n_d}(t)^2 }  \;\sqrt{ \sum_{n_1+\cdots+n_d\leqslant n} R_{n_1,\cdots,n_d}(x)^2 } \\
     & = p_{ \sigma^2(t) }^d (x) \Big \| Q_n(\cdot,t)\Big \| _{ L_2\big( \R^d, p_{\sigma^2(t)}^d(x)dx\big) }  \sqrt{ \sum_{n_1+\cdots+n_d\leqslant n} R_{n_1,\cdots,n_d}(x)^2 } \\
    &= \sqrt{ \E \Bigg[  \Big( \E \big(  P_n(X) \big| X(t) \big) \Big)^2 \Bigg] } \; p_{ \sigma^2(t) }^d (x) \sqrt{ \sum_{n_1+\cdots+n_d\leqslant n} R_{n_1,\cdots,n_d}(x)^2 }  \\
    &\leqslant \big( \max \{\sqrt{M}, 1 \} \big) ^n\; \sqrt{ \E \Big( P_n^2(X) \Big) }  \; p_{ \sigma^2(t) }^d (x) \sqrt{ \sum_{n_1+\cdots+n_d\leqslant n} \prod_{j=1}^d H_{n_j} ^2 \Big( \dfrac{x_j}{\sigma(t)} \Big) }.
\end{align*} 
Now we use the following inequality \cite{DOGS}
\begin{equation}
\label{Hermita}
\forall \; n\in \N,\;  \exists \, a_n>0\, ; \; \forall \,x\in \R\; ; |H_n(x)| \leqslant a_n e^{\frac{x^2}{4}}
\end{equation}
Consequently, there exists $C_n>0$ such that 
\begin{align*}
 \Big | p_{ \sigma^2(t) }^d(x) \cdot Q_n(x,t) \Big|
   &\leqslant C_n \sqrt{ \E \Big( P_n^2(X) \Big) } \; p_{ 2\sigma^2(t) }^d (x) .
\end{align*} 
Therefore,

   $$\Big| \int_{B} F_\ve(f)P_n(f) \mu(df) \Big| 
   \leqslant  C_n \sqrt{ \E \Big( P_n^2(X) \Big) } \int_0^1  p^d _\ve  *  p_{ 2\sigma^2(t) }^d (u)dt=  C_n \sqrt{ \E \Big( P_n^2(X) \Big) } \int_0^1    p_{\ve+ 2\sigma^2(t) }^d (u)dt\,.
$$
Using \eqref{aaaa}  one can find a constant $c(m,M)>0$ such that $$ p_{\ve+ 2\sigma^2(t) }^d (u) \leqslant C(m,M) p_{\ve+ 2Mt }^d (u) . $$ 
Moreover, for any $\ve \in ]0,1]$ 
\begin{align*}
    \int_0^1    p_{\ve+ 2Mt }^d (u)dt 
    &=\int_0^{\ve}   p_{\ve+ 2Mt}^d (u)dt + \int_\ve ^1   p_{\ve+ 2Mt }^d (u)dt \\
    & \leqslant \dfrac{\exp \Big(  -\dfrac{ \|u\|^2 }{2(1+2M)\ve } \Big)  }{(2\pi) ^{\frac{d}{2} }\ve ^{\frac{d}{2} -1 }  } + \Big( \dfrac{1+2M}{2M} \Big)^{ \frac{d}{2} }  \int_\ve ^1   p_{(1+ 2M)t }^d (u)dt \\
    & \leqslant \dfrac{\exp \Big(  -\dfrac{ \|u\|^2 }{2(1+2M)\ve } \Big)  }{(2\pi) ^{\frac{d}{2} }\ve ^{\frac{d}{2} -1 }  } + \Big( \dfrac{1+2M}{2M} \Big)^{ \frac{d}{2} }  \int_0 ^1   p_{(1+ 2M)t }^d (u)dt \\
    & \leqslant \kappa
\end{align*}
where the constant $\kappa$ depends only on $M$ and $d$. Finally, there exists a constant $c_n$ such that 
$$  \Big| \int_{B} F_\ve(f)P_n(f) \mu(df) \Big| 
   \leqslant  c_n \sqrt{ \E \Big( P_n^2(X) \Big) } \,. $$
which proves uniform finite absolute continuity of the family $F_\ve$. \\

As a consequence of this fact, one can say that the local time of the multidimensional integrator \eqref{integrator} at any point different from the origin exists as a generalized function and admits an expansion into a series of multiple stochastic integrals with respect to the integrator itself.

\end{example}

\begin{example}
We continue with the multidimensional Gaussian integrator defined by \eqref{integrator}. Let $\mu$ be the distribution of $X$ in the Banach space $B=\mathscr{C}_0\big( [0,1],\R^d\big)$. Consider for $\ve >0$ the functional defined on  $B$ by 
$$G_\ve (f)=\int_{\Delta_2} p_\ve^d  \big( f(t)-f(s) -u\big) dsdt,\, u\in \R^d\setminus \{0\}.$$
Let us check that the measures $\mu_\ve =G_\ve \mu$ are uniformly finitely absolutely continuous with respect to $\mu$. Consider a finite dimensional polynomial $P_n$ of degree $n$ on $B$. Similarly to Example  \eqref{bbb} one can check that 
\begin{align*}
    \Big| \int_{B} G_\ve(f)P_n(f) \mu(df) \Big| 
   &\leqslant \int_{\Delta_2} \E \Big[ p_\ve ^d \big( X(t) -X(s)-u\big) \, \Big|\E \big(  P_n(X) \big| X(t)-X(s) \big)\Big| \Big]   dsdt \\
   &\leqslant \int_{\Delta_2}  p^d _\ve  *  \Big|     p_{ \sigma^2(s,t)}^d \cdot Q_n (\cdot ,s,t)\Big|(u)dsdt 
\end{align*}
where 
$$ Q_n(x,s,t)= \E \Big(  P_n(X) \Big| X(t)-X(s)=x \Big) $$
is a polynomial of degree not greater than $n$, and 
$$ \sigma(s,t)=\|A\mathbf{1}_{[s,t]} \|. $$
Since $A$ is invertible then there exist $0<m<M$ such that
\begin{equation} \label{bbbb}
    \sqrt{m}\sqrt{t-s} \leqslant \sigma(s,t) \leqslant \sqrt{M}\sqrt{t-s}
\end{equation} 
Using again the orhtonormal basis of $L_2\Big( \R^d, p_{\sigma^2}^d(x)dx\Big) $ one can find a constant $C_n>0$ such that 
$$\Big | p_{ \sigma^2(s,t) }^d(x) \cdot Q_n(x,s,t) \Big|
   \leqslant C_n \sqrt{ \E \Big( P_n^2(X) \Big) } \; p_{ 2\sigma^2(s,t) }^d (x) .$$
Thus,
\begin{align*}
   \Big| \int_{B} G_\ve(f)P_n(f) \mu(df) \Big| 
   &\leqslant  C_n \sqrt{ \E \Big( P_n^2(X) \Big) } \int_{\Delta_2}  p^d _\ve  *      p_{ 2\sigma^2(s,t)}^d (u)dsdt \\
   &=  C_n \sqrt{ \E \Big( P_n^2(X) \Big) } \int_{\Delta_2}    p_{\ve+ 2\sigma^2(s,t) }^d (u)dsdt\\
   &\leqslant  c(m,M) C_n \sqrt{ \E \Big( P_n^2(X) \Big) } \int_{\Delta_2}    p_{\ve+ 2M(t-s) }^d (u)dsdt\\
   &\leqslant  c(m,M) C_n \sqrt{ \E \Big( P_n^2(X) \Big) } \int_{0}^1    p_{\ve+ 2Mt }^d (u)dt\\
   &\leqslant  \underbrace{\kappa c(m,M) C_n }_{c_n}\sqrt{ \E \Big( P_n^2(X) \Big) }
\end{align*}
This proves uniform finite absolute continuity of the family $G_\ve$.\\

From this point, one can deduce that the intersection local time of the multidimensional integrator \eqref{integrator}, formally defined by,
$$ \rho_X(u)=\int_{\Delta_2} \delta_u \big( X(t)-X(s) \big) dsdt= \lim _{\ve \to 0} G_\ve (X),\, u\in \R^d\setminus \{0\} $$
is a generalized function from $X$ and admits a chaos expansion with respect to $X$.
\end{example}

\end{document}